\newtheorem{Theorem}{Theorem}[section]
\newtheorem{Lemma}[Theorem]{Lemma}
\theoremstyle{Definition}
\newtheorem{Definition}[Theorem]{Definition}
\newtheorem{Proposition}[Theorem]{Proposition}
\theoremstyle{Remark}
\newtheorem{Remark}[Theorem]{Remark}
\theoremstyle{notation}
\numberwithin{equation}{section}
 \def\3{\operatorname{3}}
\begin{document}

\title{On the inverse limits of finite posets }

\author{Jing-Wen Gao   }
\address{School of Mathematics and Statistics, Huazhong University of Science and Technology, Wuhan 430074, P.R.China}

\email{d202280005@hust.edu.cn}

\author{Xiao-Song Yang  }

\thanks{} 

\subjclass[2020]{05E45, 06A07}

\date{\today}


\keywords{finite poset, barycentric subdivision, inverse limit.}

\begin{abstract}
In this paper, we show that any finite simplicial complex is homeomorphic to the inverse limit of a sequence of finite posets, which is an extension of Clader’s result.

\end{abstract}

\maketitle
\section{Introduction}
The theory of finite posets is based in a well-known correspondence between  topologies and preorders $\leq$ on a finite set $X$ given by Alexandroff \rm\cite{PSA1937}, and the topology on $X$ satisfies $T_0$ separation axiom if and only if the relation $\leq$ is a partial order.
 The homotopy theory of finite posets was introduced by
Stong \rm\cite{ST1966} and McCord \rm\cite{MC1966} from totally different perspectives. Stong \rm\cite{ST1966} focused on the combinatorics of finite posets to study their homotopy types. Stong showed that removing linear and colinear points (which are called up and beat points in \rm\cite{JP2003}) from a finite poset does not affect its homotopy type.
The main result McCord \rm\cite[Theorem 1]{MC1966} obtained in this paper is to establish a correspondence between finite simplicial complexes and finite posets up to weak homotopy equivalence. 

In recent years, there are many developments in the theory of finite posets.
For example, Barmak \rm{\cite {JAB2011}} studied some combinatorial problems by means of finite posets. N. Cianci and M. Ottina  applied spectral sequences to compute the homology groups of finite posets in \rm{\cite {NM2017}}, and described the Hurewicz cofibrations between finite posets in \rm{\cite {NM2019}}.

Our result in this paper makes progress toward an extension of Clader’s result.
 Clader {\rm\cite[Theorem 1.1]{EC}} obtained a generalization of McCord's result, showing that any finite simplicial complex is homotopy equivalent to an inverse limit of a sequence of finite posets. Expanding further upon Clader’s work, we prove a stronger version (see Theorem \ref{M2}) in Section \ref{4}, which states that any finite simplicial complex is homeomorphic to an inverse limit of a sequence of finite posets. As an application of Theorem \ref{M2}, we obtain an important approximation theorem (see Theorem \ref{M3}).

  In some situations, dealing with topological problems in the context of finite posets is more simple and tractable than in simplicial complexes. 
Theorem \ref{M3} provides a new viewpoint to approximate continuous maps in terms of morphisms in finite posets.

This paper is organized as follows. In Section \ref{2}, we give basic definitions and results concerning  finite posets and simplicial complexes. In Section \ref{4}, we associate a finite simplicial complex with an inverse system of finite posets and prove Theorem \ref{M2} and Theorem \ref{M3}.

\section{Preliminaries}\label{2}
In this section, we will review the fundamental facts relevant to this paper, refering the reader to  \rm{\cite{JAB2011, EC, SP1966}} for more details on finite posets and simplicial complexes.

An \emph{Alexandroff} space is a topological space in which any intersection of open subsets is still open. Given a set $X$, we endow it with a topology making it into an Alexandroff space is the same as impose a \emph{reflexive and transitive} relation (that is a \emph{preorder}) on it. This correspondence can be described briefly.
Let $X$ be an Alexandroff space, and for each $x\in X,$ let $U_x$ denote the intersection of all open sets that contain $x$. Then for $x, y\in X$, we define $x\leq y$ if $x\in U_y$. On the other hand, if $X$ has a reflexive and transitive relation, one can define a topology given by the basis $\{U_x~|~x\in X\}$, where $U_x=\{y\leq x~|~y\in X\}$.
It is easy to check that the \emph{antisymmetry} of the relation on $X$ (thus a \emph{partial order}) is equivalent to the $T_0$ separation axiom on the topology of $X$. A set with a partial order
is called a \emph{poset}. 

We review McCord's correspondence {\rm{\cite{MC1966}}} between finite posets and finite simplicial complexes. 
Given a finite poset $X$, $\mathcal{K}(X)$ is a simplicial complex whose vertices are the points of $X$, and simplices are the subsets of $X$ that are totally ordered.
Conversely, if $K$ is a finite simplicial complex, $\chi(K)$ denotes the face poset of $K$. That is, $\chi(K)$ is a poset of simplices of $K$ ordered by inclusion.

\begin{Theorem}{\rm (McCord)}
	\begin{itemize}
		\item [(1)] If $X$ is a finite poset, then there exists a weak homotopy equivalence $\mu_X: |\mathcal{K}(X)|\rightarrow X$.
		\item[(2)] If $K$ is a finite simplicial complex, then there exists a weak homotopy equivalence $\mu_K: |K|\rightarrow \chi(K)$.
	\end{itemize}
\end{Theorem}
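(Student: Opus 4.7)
The plan is to apply McCord's general criterion: if $f \colon Y \to Z$ is continuous and $\mathcal{B}$ is a basis for the topology on $Z$ such that every preimage $f^{-1}(B)$ is weakly contractible, then $f$ is a weak homotopy equivalence. For a finite poset $X$ with the Alexandroff topology, the minimal open neighborhoods $\{U_y : y \in X\}$ form such a basis, so it suffices to produce the maps and to verify contractibility of their preimages on these basic opens.

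For part $(1)$ I would define $\mu_X \colon |\mathcal{K}(X)| \to X$ by sending $\alpha$ to the minimum of its carrier chain $x_0 < \cdots < x_k$ in $X$. Then $\mu_X^{-1}(U_y)$ is the union of the open stars in $|\mathcal{K}(X)|$ of those vertices $z \leq y$, and is therefore open, so $\mu_X$ is continuous. For contractibility of $\mu_X^{-1}(U_y)$ I would exhibit a strong deformation retraction onto the subcomplex $|\mathcal{K}(U_y)|$ by linearly fading to zero the barycentric coordinates at vertices outside $U_y$ and renormalizing: the denominator is positive exactly on $\mu_X^{-1}(U_y)$, the support along the homotopy never leaves a single simplex of $\mathcal{K}(X)$ because it can only shrink, and the minimum of the support stays $\leq y$ throughout. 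Finally, $|\mathcal{K}(U_y)|$ is a cone with apex $y$, since $y$ is the top element of $U_y$ and so can be adjoined to every chain in $U_y$, and hence is contractible. McCord's criterion then yields $(1)$.

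Part $(2)$ reduces to $(1)$ by specializing to the finite poset $X = \chi(K)$: the order complex $\mathcal{K}(\chi(K))$ is exactly the barycentric subdivision of $K$, so precomposing $\mu_{\chi(K)}$ with the standard piecewise-linear homeomorphism $|K| \to |\mathcal{K}(\chi(K))|$ produces the required $\mu_K$. The main obstacle is the contractibility step in part $(1)$: the naive straight-line homotopy from $\alpha$ toward the vertex $y \in |\mathcal{K}(X)|$ fails precisely when $y$ is incomparable to some vertex of the carrier of $\alpha$, in which case the union of their supports is not a chain in $X$ and the intermediate points leave $|\mathcal{K}(X)|$. The fading-and-renormalizing homotopy above circumvents this because it only shrinks the support rather than enlarging it, keeping the homotopy inside a single simplex at all times.
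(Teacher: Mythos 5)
The paper itself gives no proof of this statement: it is quoted from McCord's 1966 paper, and the only related ingredients the paper records are the basis-like open cover criterion (Theorem \ref{4.3}) and, for its own maps $p_n$, the contractibility of minimal basic open sets (Lemma \ref{4.1}). Your argument is the standard McCord proof and it is essentially correct: $\mu_X$ is well defined because the carrier of a point is a finite chain; $\mu_X^{-1}(U_y)$ is the union of the open stars of the vertices $z\leq y$, hence open, giving continuity; the fade-and-renormalize homotopy is a strong deformation retraction of $\mu_X^{-1}(U_y)$ onto $|\mathcal{K}(U_y)|$ (the normalizing denominator is bounded below by the total weight carried by vertices in $U_y$, which is positive exactly on $\mu_X^{-1}(U_y)$, and the minimum of the support is preserved throughout, so the homotopy stays in a single closed simplex and in $\mu_X^{-1}(U_y)$); $|\mathcal{K}(U_y)|$ is a cone with apex the maximum $y$; and part (2) follows from part (1) because $K_1=\mathcal{K}(\chi(K))$ together with the affine homeomorphism $|K_1|\cong|K|$. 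The one imprecision is your statement of the criterion: McCord's theorem, as recorded in Theorem \ref{4.3}, assumes that each restriction $f|_{f^{-1}(U)}\colon f^{-1}(U)\to U$ is a weak homotopy equivalence, not merely that $f^{-1}(U)$ is weakly contractible, so to invoke it you must also note that each $U_y$ is itself contractible; this is immediate since $y$ is the maximum of $U_y$ (the same one-line comparison-of-maps argument as Lemma \ref{4.1}, with the order reversed), and then a map between the weakly contractible spaces $\mu_X^{-1}(U_y)$ and $U_y$ is automatically a weak homotopy equivalence. With that sentence added your proof is complete, and it follows the same strategy the paper itself uses to show that $p_n$ is a weak homotopy equivalence.
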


A map $f: X\rightarrow Y$ between finite posets is \emph{order preserving} if $x\leq x^{\prime}$ implies $f(x)\leq f(x^{\prime})$ for every $x, x^{\prime}in X$.
\begin{Proposition}{\rm{\cite[Proposition 1.2.1] {JAB2011}}}\label{2.1}
	A map $f:X\rightarrow Y$ between finite posets is continuous if and only if it is order preserving.
\end{Proposition}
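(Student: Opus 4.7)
The plan is to prove both implications by working directly with the basic open sets $U_x = \{z \in X \mid z \leq x\}$ of the Alexandroff topology, which were introduced just before the statement.

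For the forward direction, I assume $f$ is continuous and take $x \leq x'$ in $X$. The natural move is to probe $f(x')$ with its minimal open neighborhood $U_{f(x')}$. By continuity, $f^{-1}(U_{f(x')})$ is open in $X$ and contains $x'$; since $U_{x'}$ is the smallest open set containing $x'$, I get the inclusion $U_{x'} \subseteq f^{-1}(U_{f(x')})$. Because $x \leq x'$ means $x \in U_{x'}$, this yields $f(x) \in U_{f(x')}$, i.e.\ $f(x) \leq f(x')$.

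For the reverse direction, assume $f$ is order preserving. Since the sets $U_y$ form a basis for the topology on $Y$, it suffices to show each $f^{-1}(U_y)$ is open in $X$, and by the Alexandroff property it is enough to verify that for every $x \in f^{-1}(U_y)$ the whole minimal neighborhood $U_x$ lies in $f^{-1}(U_y)$. So I take $x' \leq x$ with $f(x) \leq y$; order preservation gives $f(x') \leq f(x) \leq y$, hence $x' \in f^{-1}(U_y)$ as required.

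Neither direction presents a real obstacle, since both reduce to a single application of the definition of the basic open sets $U_x$; the only thing to be careful about is citing that $\{U_x\}$ is a basis (so continuity can be tested on these sets) and that $U_x$ is the \emph{minimal} open set containing $x$ (which is what lets order preservation on individual pairs upgrade to an inclusion of preimages).
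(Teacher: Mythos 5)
Your proof is correct and is essentially the standard argument (the paper itself gives no proof, citing Barmak, whose proof runs exactly along these lines): use that $U_x$ is the minimal open set containing $x$ to extract order preservation from continuity, and conversely show each $f^{-1}(U_y)$ is a union of minimal open sets. Both steps are carried out correctly, including the transitivity needed in the reverse direction, so nothing is missing.
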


We now recall some notions about simplicial complexes.
Let $K=(V_K, S_K)$ be a simplicial complex, where $V_K$ is a set, called the set of vertices, and $S_K$ is a subset of the power set of $V_K$, called the set of simplices.
If $\sigma$ is a simplex of $K$, the \emph{star} $st(\sigma)$ is the subcomplex of $K$ whose simplices are the simplices $\tau$ of $K$ satisfying $\sigma\cup\tau\in K$.
The \emph{link} $lk(\sigma)$ is the subcomplex of $st(\sigma)$ whose simplices are disjoint from $\sigma$.

Let $\sigma=\{v_0, v_1,\cdots, v_n\}$ be a simplex of $K$. The \emph{closed} simplex $\overline{\sigma}$ is the set consisting of convex combinations $\sum\limits_{i=0}^n a_iv_i,~0\leq a_i\leq 1,~0\leq i\leq n,~\sum\limits_{i=0}^{n}a_i=1.$ The \emph{open} simplex $\mathring{\sigma}$ is the subset of $\overline{\sigma}$ consisting of $\sum\limits_{i=0}^n a_iv_i$ such that $a_i>0$ for every $0\leq i\leq n.$

A closed simplex $\overline{\sigma}$ is a 
metric space with the metric $d$ given by $$d(\sum\limits_{i=0}^na_iv_i, \sum\limits_{i=0}^nb_iv_i)
=\sqrt{\sum\limits_{i=0}^n(a_i-b_i)^2}.$$ The \emph{geometric realization} $|K|$ is the set that consists of convex combinations $\sum\limits_{v\in K}a_vv$ such that $\{v~|~a_v>0\}\in K$.
In other words, $|K|$ is the union of the closed simplices $\overline{\sigma}$
such that $\sigma\in K$.

The topology of $|K|$ is given as follows. A set $U\subset |K|$ is open (or closed) if and only if $U\cap \overline{\sigma}$ is open (or closed) in the metric space $\overline{\sigma}$ for every simplex $\sigma\in K$.

\section{An extension of  Clader's main theorem and its application }\label{4}
The techniques we use in the proof of our main result (Theorem \ref{M2}) are similar to those of Clader's {\rm\cite[Theorem 1.1]{EC}}. We keep the notation employed there. However, it should be noted that for each $n\geq 1$, the finite poset $X_n$ to be defined is different from the one in {\rm\cite{EC}}, and then  maps related to $X_n$ will be constructed in a different way.
   
We first recall the definition of  $n$-th barycentric subdivision.
\begin{Definition}
If $K$ is a finite simplicial complex, then $K_1=\mathcal{K}(\chi(K))$ is called the first barycentric subdivision of $K$. The second barycentric subdivision $K_2$ of $K$ is the barycentric subdivision of the first barycentric subdivision $K_1$, and so on inductively.
\end{Definition}

Let $K$ be a finite simplicial complex. For each $n\geq 0$, let $K_n$ denote the $n$-th barycentric subdivision of $K$. Corresponding to each $K_n$, let $X_n$ be a finite space whose points are the vertices of $K_n$.

 Before imposing a partial order on each $X_n, ~n\geq 1$, we consider a map $$p_n: |K|\rightarrow X_n,~ n\geq 1,$$ defined in the following way. Since every point $x$ of $|K|$ is contained in  precisely one open simplex $\mathring{\sigma}$ in $|K_{n-1}|$, and there is a unique vertex $b_n\in X_n$ lying in $\mathring{\sigma}$,  
we  define $p_n(x)=b_n.$ Therefore
$X_n$ can be made into a poset by means of $p_n$. For $x, y\in X_n,~x\leq y$ if and only if $\overline{p_n^{-1}(x)}\subset \overline{p_n^{-1}(y)}$.
The topology on the finite poset $X_n$ is given by the basis $\{B_x~|~x\in X_n\}$, where $B_x=\{y\in X_n~|y\geq x\}$.

We now show that $p_n$ is a weak homotopy equivalence.
It is not difficult to see that	$p_n$ is continuous, since for any $x\in X_n,$ we have
\begin{equation}\label{(4.1)}
p_n^{-1}(B_x)=\bigcup_{y\in B_x}\mathring{\sigma}_y^{n-1},
\end{equation}
where $\mathring{\sigma}_y^{n-1}$ denotes the open simplex in $|K_{n-1}|$ containing $y$.

\begin{Lemma}\label{4.1}
	For every $x\in X_n$, $B_x$ is contractible.
\end{Lemma}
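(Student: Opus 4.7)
The plan is to exploit the fact that $x$ is the minimum element of $B_x$. By definition $B_x = \{y \in X_n : y \geq x\}$ contains $x$ itself, and every element of $B_x$ lies above $x$, so $x$ is a minimum for the partial order restricted to $B_x$.

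With this observation in hand, I would exhibit an explicit contracting homotopy $H : B_x \times [0,1] \to B_x$ from the constant map at $x$ to the identity of $B_x$, defined by
\[
H(y, t) = \begin{cases} x & \text{if } t = 0, \\ y & \text{if } t > 0. \end{cases}
\]
Clearly $H(y,0) = x$ and $H(y,1) = y$, so the only thing left to verify is continuity of $H$.

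For the continuity check, recall that the topology on $B_x$ is generated by the basic open sets $\{z \in B_x : z \geq y\}$ for $y \in B_x$, so a subset $U \subseteq B_x$ is open precisely when it is closed under passing to larger elements. Let $U \subseteq B_x$ be open. If $x \in U$, then because every element of $B_x$ lies above $x$, closure under larger elements forces $U = B_x$, and therefore $H^{-1}(U) = B_x \times [0,1]$ is open. If instead $x \notin U$, then $H^{-1}(U) = U \times (0,1]$, which is open in the product topology. Hence $H$ is continuous and $B_x$ deformation retracts onto $\{x\}$.

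The argument is essentially the standard observation that a finite poset possessing a minimum (or dually a maximum) element is contractible, a fact used throughout the finite-topology literature. I do not anticipate any serious obstacle; the only subtle point is the continuity verification, which becomes immediate once one recalls the description of open sets in the Alexandroff topology together with the special role of the minimum.
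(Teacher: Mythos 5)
Your proof is correct and rests on the same idea as the paper's: $x$ is the minimum of $B_x$, so the constant map at $x$ is comparable to the identity and hence homotopic to it. Your explicit homotopy $H$ and its continuity check simply spell out the standard "comparable order-preserving maps are homotopic" fact that the paper invokes in one line, so the two arguments coincide in substance.
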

\begin{proof}
	Define a map $r: B_x\rightarrow \{x\},~r(y)=x$. Then $r\geq id_{\{x\}}$, and from Proposition \ref{2.1}, $r\simeq id_{\{x\}}$.
\end{proof}

\begin{Lemma}\label{4.2}
	For every $x\in X_n, ~p_n^{-1}(B_x)$ is contractible.
\end{Lemma}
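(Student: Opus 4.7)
The plan is to recognize $p_n^{-1}(B_x)$ as the open star of a simplex in $|K_{n-1}|$ and then exhibit an explicit straight-line deformation retraction onto the point $x$.

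First, I would unpack the definitions to describe $p_n^{-1}(B_x)$ concretely. Since $x \in X_n$ is a vertex of $K_n$, it is the barycenter of a unique simplex $\sigma_x^{n-1}$ of $K_{n-1}$, and the open simplex of $|K_{n-1}|$ containing $x$ is precisely $\mathring{\sigma}_x^{n-1}$; hence $p_n^{-1}(x) = \mathring{\sigma}_x^{n-1}$. The defining order $y \geq x$ on $X_n$ then translates, via $\overline{p_n^{-1}(x)} \subset \overline{p_n^{-1}(y)}$, to the statement that $\sigma_x^{n-1}$ is a face of $\sigma_y^{n-1}$. Combined with equation (4.1), this gives
$$p_n^{-1}(B_x) \;=\; \bigcup_{\sigma_x^{n-1}\text{ a face of }\sigma_y^{n-1}} \mathring{\sigma}_y^{n-1},$$
which is exactly the open star of $\sigma_x^{n-1}$ in $|K_{n-1}|$.

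Next, I would define the homotopy $H : p_n^{-1}(B_x) \times [0,1] \to p_n^{-1}(B_x)$ by the straight-line formula $H(y, t) = (1-t)y + t\,x$. For any $y \in \mathring{\sigma}_y^{n-1}$ with $\sigma_x^{n-1}$ a face of $\sigma_y^{n-1}$, I expand $y = \sum a_i v_i$ as a convex combination of the vertices of $\sigma_y^{n-1}$ with all $a_i > 0$, and write $x$ as the barycenter of $\sigma_x^{n-1}$ using a subset of those same vertices. Every barycentric coordinate of $H(y, t)$ with respect to $\sigma_y^{n-1}$ then remains strictly positive for $t \in [0, 1)$, so $H(y, t) \in \mathring{\sigma}_y^{n-1} \subset p_n^{-1}(B_x)$, while $H(y, 1) = x \in \mathring{\sigma}_x^{n-1} \subset p_n^{-1}(B_x)$ (using $x \in B_x$ by reflexivity). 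Continuity of $H$ is immediate because $|K|$ is a finite simplicial complex, so its topology coincides with the metric topology and the straight-line formula is continuous in the ambient Euclidean space. Thus $H$ is a homotopy from the identity to the constant map at $x$, proving contractibility.

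The main obstacle is the bookkeeping: first, correctly identifying $p_n^{-1}(B_x)$ with the open star in $|K_{n-1}|$ (not $|K_n|$), and second, verifying that the straight-line homotopy actually stays inside $p_n^{-1}(B_x)$ rather than drifting into some $\mathring{\sigma}_y^{n-1}$ whose closure fails to contain $\sigma_x^{n-1}$. Once the open-star identification is in place, the contraction is the standard observation that an open star is star-shaped about the barycenter of its defining simplex.
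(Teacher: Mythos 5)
Your proof is correct and follows essentially the same route as the paper: you identify $p_n^{-1}(B_x)$ with the open star of $\sigma_x^{n-1}$ in $K_{n-1}$ and conclude by its contractibility. The only difference is that the paper simply invokes contractibility of $(st_{n-1}(\sigma_x^{n-1})-lk_{n-1}(\sigma_x^{n-1}))^{\circ}$ as a standard fact, whereas you supply the explicit straight-line contraction to the barycenter, which is a welcome extra detail but not a different approach.
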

\begin{proof}
	Let $$\sigma_x^{n-1}=\bigcup_{z\in\overline{\sigma}_x^{n-1}} \text{support} (z),$$
	where $\overline{\sigma}_x^{n-1}$ is the closure of $\mathring{\sigma}_x^{n-1}$.
Then $p_n^{-1}(B_x)=\bigcup_{y\in B_x}\mathring{\sigma}_y=(st_{n-1}({\sigma}_x^{n-1})-lk_{n-1}({\sigma}_x^{n-1}))^{\circ}$, in which
 $st_{n-1}({\sigma}_x)$ and $lk_{n-1}({\sigma}_x^{n-1})$ denote the star and link of $\sigma_x^{n-1}$ in $K_{n-1}$ respectively. Because of the contractivity of $(st_{n-1}({\sigma}_x^{n-1})-lk_{n-1}({\sigma}_x^{n-1}))^{\circ}$, $~p_n^{-1}(B_x)$ is contractible.

\end{proof}

\begin{Remark}\label{Remark4}
{\rm	For $x=\sum\limits_{v\in K}a_vv\in |K|$, the support of $x$ is the simplex $$\text{\rm support}(x)=\{v~|~a_v>0\}.$$}
\end{Remark}

Before coming to our conclusion , we need the following Theorem.
\begin{Theorem}{\rm{\cite[Theorem 6]{MC1966}}}\label{4.3}
	Let $X$ and $Y$ be topological spaces, and let $f: X\rightarrow Y$ be any continuous map. Suppose that there exists an open cover $\mathcal{U}$ of $Y$ satisfying that for any $U, V\in \mathcal{U},$ and $x\in U\cap V,$ then there exists $W\in \mathcal{U}$ 
	such that $x\in W\subset U\cap V$. If for any $U\in \mathcal{U},$ the restriction
	$f|_{f^{-1}(U)}: f^{-1}(U)\rightarrow U$ is a weak homotopy equivalence, then $f: X\rightarrow Y$ is a weak homotopy equivalence.
\end{Theorem}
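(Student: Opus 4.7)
The goal is to verify, for every basepoint $x_0\in X$ and every $k\ge 0$, that $f_*:\pi_k(X,x_0)\to\pi_k(Y,f(x_0))$ is a bijection (and that $f$ induces a bijection on path components). The natural strategy is to lift spherical maps and null-homotopies from $Y$ to $X$ one simplex at a time, using the assumed local weak equivalences, and to rely on the basis-type closure condition on $\mathcal{U}$ to glue the local lifts coherently. Concretely, given a map $g:(S^k,\ast)\to(Y,y_0)$, the plan is to first pull back the cover to get $\{g^{-1}(U)\}_{U\in\mathcal{U}}$, then choose a triangulation $T$ of $S^k$ whose mesh is smaller than a Lebesgue number of this cover, so that every closed simplex $\overline{\sigma}$ of $T$ satisfies $g(\overline{\sigma})\subset U_{\sigma}$ for some $U_{\sigma}\in\mathcal{U}$. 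The hypothesis on $\mathcal{U}$ lets us refine the assignment $\sigma\mapsto U_{\sigma}$ so that whenever $\tau$ is a face of $\sigma$, we may take $U_{\tau}\subset U_{\sigma}$ (walk down the face lattice, applying the closure-under-intersection property to the inductively built $U_{\sigma}$ and the candidate containing $g(\overline{\tau})$).

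With such a coherent system of opens in place, I would build a lift $\widetilde{g}:S^k\to X$ by induction on the skeleta of $T$. On vertices, pick any preimage under $f$ in the appropriate $f^{-1}(U_v)$; on a $j$-simplex $\sigma$, the partial lift along $\partial\sigma$ lands in $f^{-1}(U_{\sigma})$ by the compatibility $U_{\tau}\subset U_{\sigma}$, while the continuous map $g|_{\overline{\sigma}}$ takes values in $U_{\sigma}$. Because $f|_{f^{-1}(U_{\sigma})}\to U_{\sigma}$ is a weak homotopy equivalence and $(\overline{\sigma},\partial\sigma)$ is a relative CW pair of dimension $j$, the relative lifting problem for this pair can be solved up to homotopy rel $\partial\sigma$, the homotopy taking place inside $U_{\sigma}$. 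Absorbing the homotopy into $g$ (by redefining $g$ on a collar of $\overline{\sigma}$, still inside $U_{\sigma}$) produces an honest extension of $\widetilde{g}$ over $\sigma$ whose composition with $f$ equals the modified $g$ on that simplex. After finishing all simplices, $f\circ\widetilde{g}$ is homotopic to the original $g$, proving surjectivity of $f_*$ on $\pi_k$.

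For injectivity, I would run the identical machinery on the cylinder $S^k\times I$: given two lifts $\widetilde{g}_0,\widetilde{g}_1$ with $f\widetilde{g}_0\simeq f\widetilde{g}_1$ via a homotopy $H:S^k\times I\to Y$, triangulate $S^k\times I$ finely with respect to $H^{-1}(\mathcal{U})$, make a compatible assignment of opens $U_{\sigma}\in\mathcal{U}$ as before, and lift $H$ skeleton by skeleton to a homotopy $\widetilde{H}$ between $\widetilde{g}_0$ and $\widetilde{g}_1$ in $X$. The case $k=0$ (path components) is the same induction on a $0$-simplex and a $1$-simplex. The argument for basepoints is handled uniformly by treating the basepoint simplex as the start of the induction and keeping all chosen opens around the basepoint nested.

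The technical heart, and the main obstacle, is precisely the coherence step: the inductive extension over $\sigma$ only solves the lifting problem up to a homotopy inside $U_{\sigma}$, so one must actually modify the target map $g$ (or $H$) within $U_{\sigma}$ to turn the up-to-homotopy lift into a genuine one, and then verify that the modifications performed on different simplices are compatible on their intersections. This is exactly what the hypothesis on $\mathcal{U}$ buys: for any $\sigma$, $\sigma'$ and any point of $\overline{\sigma}\cap\overline{\sigma'}$, the images lie in a common $W\in\mathcal{U}$ contained in $U_{\sigma}\cap U_{\sigma'}$, so each local modification can be arranged to live inside such a $W$, yielding continuity of the patched map. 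Once this coherence is established, the rest of the proof is a standard skeleton-by-skeleton induction coupled with the relative Whitehead-style lifting provided by the local weak equivalences.
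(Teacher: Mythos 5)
Your overall strategy (simplexwise lifting using the standard relative-lifting lemma for weak equivalences, with local homotopies absorbed into $g$) is a reasonable and essentially self-contained route, but the step you yourself call the technical heart is not actually established, and as stated it does not work. The hypothesis on $\mathcal{U}$ is a \emph{pointwise} condition: given $U,V\in\mathcal{U}$ and a single point $x\in U\cap V$, it produces $W\in\mathcal{U}$ with $x\in W\subset U\cap V$. Your coherence step, however, needs much more: for each positive-dimensional face $\tau$ you need a single member $U_\tau\in\mathcal{U}$ containing the whole compact set $g(\overline{\tau})$ and contained in $\bigcap\{U_\sigma:\sigma\geq\tau\}$. ``Walking down the face lattice'' with the basis-like property only yields a member of $\mathcal{U}$ around one point of $g(\overline{\tau})$; the intersection $\bigcap_{\sigma\geq\tau}U_\sigma$ is a union of members of $\mathcal{U}$, but there is no reason a single member contains $g(\overline{\tau})$. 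Choosing the mesh of $T$ smaller than a Lebesgue number of $g^{-1}(\mathcal{U})$ does not help, because the relevant cover (members of $\mathcal{U}$ lying inside the intersections of the cofaces' opens) is only defined \emph{after} the triangulation and the top-dimensional assignment have been fixed; restoring the containment by subdividing $\tau$ changes the complex and creates new intermediate faces whose constraints must be re-verified, so the induction has to be reorganized (e.g.\ by descending dimension with interleaved refinements, or by a Lebesgue-number argument for a cover of the form $g^{-1}(W)\cap O_x$ with $W\in\mathcal{U}$ chosen inside the intersection of the opens of all top simplices incident to $x$). The same pointwise-versus-compact confusion reappears in your final patching claim (``each local modification can be arranged to live inside such a $W$''), which is what both the inductive lifting step and the preservation of $g(\overline{\sigma'})\subset U_{\sigma'}$ under absorption depend on. Without a genuine construction of the face-monotone assignment, the proof does not close.

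For context: the paper does not prove this statement at all; it is quoted as Theorem 6 of McCord, whose own argument is organized quite differently (a patching criterion in the style of Dold--Thom's treatment of quasifibrations, adapted to weak homotopy equivalences, rather than a direct simplex-by-simplex lifting). So even once your coherence step is repaired, you would be giving an independent, more elementary but more delicate proof; the repair is the whole content, and at present it is missing. Secondary points (basepoints, the $\pi_0$ statement, and the injectivity argument on $S^k\times I$ or $(D^{k+1},S^k)$) are fine modulo the same gap, since the prescribed lifts on the boundary are exact lifts and the relative lifting lemma applies.
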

\begin{Theorem}
	$p_n$ is a weak homotopy equivalence.
\end{Theorem}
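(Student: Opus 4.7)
The plan is to apply Theorem \ref{4.3} directly, using the basis $\mathcal{U} = \{B_x \mid x \in X_n\}$ as the required open cover of $X_n$. Three things need to be checked: that $\mathcal{U}$ satisfies the intersection-refinement property demanded by Theorem \ref{4.3}, that each restriction $p_n|_{p_n^{-1}(B_x)} \colon p_n^{-1}(B_x) \to B_x$ is a weak homotopy equivalence, and (already noted just after equation \eqref{(4.1)}) that $p_n$ itself is continuous.

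First I would verify the combinatorial condition on $\mathcal{U}$. Suppose $z \in B_a \cap B_b$ for some $a, b \in X_n$; then by definition $z \geq a$ and $z \geq b$, so every $w \in B_z$ satisfies $w \geq z \geq a$ and $w \geq z \geq b$ by transitivity. Hence $B_z \subset B_a \cap B_b$ and $z \in B_z$, so $W := B_z$ works. Next I would check the local weak equivalence hypothesis. By Lemma \ref{4.1}, $B_x$ is contractible, and by Lemma \ref{4.2}, $p_n^{-1}(B_x)$ is contractible. Thus both source and target of $p_n|_{p_n^{-1}(B_x)}$ are contractible, hence weakly contractible, so any continuous map between them induces the trivial (hence bijective) map on all homotopy groups; in particular the restriction of $p_n$ is a weak homotopy equivalence.

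With these two items in hand, Theorem \ref{4.3} applies to $p_n \colon |K| \to X_n$ and yields the desired conclusion.

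The step I expect to be least routine is the appeal to Lemma \ref{4.2} — one has to be sure that the preimage $p_n^{-1}(B_x)$ really is the open star-minus-link region identified in its proof, so that the intersection-refinement condition for $\mathcal{U}$ is being checked against the correct preimages. Everything else (continuity of $p_n$ via \eqref{(4.1)}, the order-theoretic verification, and the ``map between contractibles is a weak equivalence'' observation) is formal and brief.
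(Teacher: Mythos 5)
Your proposal is correct and follows exactly the paper's route: the paper's proof of this theorem is just the citation of Lemma \ref{4.1}, Lemma \ref{4.2}, and Theorem \ref{4.3}, and your write-up merely supplies the routine verifications (the basis-like condition for $\{B_x\}$ and the fact that a map between contractible spaces is a weak equivalence) that the paper leaves implicit.
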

 \begin{proof}
It follows immediately from Lemma \ref{4.1}, Lemma \ref{4.2}, and Theorem \ref{4.3}.
\end{proof}

We refer the reader to \rm\cite{JJR2008} for basic facts on inverse limit. In this section, the inverse limit in the category of finite $T_0$ spaces will be considered.

There is a map  
$$q_n: X_n\rightarrow X_{n-1}, n\geq 2,$$ defined as follows.
For any $b_n\in X_n$, $b_n$ lies in exactly one open simplex $\mathring{\sigma}^{n-2}$ in $|K_{n-2}|$, there is a unique vertex $b_{n-1}\in X_{n-1}$ lying in $\mathring{\sigma}^{n-2}$,
let $q_n(b_n)=b_{n-1}$.
We then can define a family of maps $\{f_n^m: X_m\rightarrow X_n~ |~m\geq n\geq 1\}$ as follows.
For $m=n$, let $f_n^n=id.$ For $m>n$, let $f_n^m=q_m\circ\cdots\circ q_{n+1}$.
It is straightforward to check that maps $\{f_n^m\}$ satisfy $f_n^m=f^m_k\circ f_n^k,$ for $m\geq k\geq n$.
The continuity of $f_n^m$ follows from the following commutative diagram 
\begin{equation}\label{(4.2)}
\xymatrix{|K|\ar[d]_{p_m}\ar[dr]^{p_n} \\
	X_m \ar[r]^{f_n^m}  & X_n.\\
}
\end{equation}
Thus we have an inverse system of finite posets
$$\{X_n,~n\geq 1,~f_n^m,~m\geq n\geq 1\}.$$
Denote its inverse limit by $\widetilde{X}$.

\begin{Theorem}\label{M2}
	Let $K$ be a finite simplicial complex. Then $|K|$ is homeomorphic to the $\widetilde{X}$ defined above.
\end{Theorem}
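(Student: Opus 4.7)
The plan is to apply the universal property of the inverse limit to the compatible family $\{p_n\colon |K|\to X_n\}$—compatibility being exactly diagram~\eqref{(4.2)}—to obtain a canonical continuous map $p\colon |K|\to\widetilde X$ satisfying $\pi_n\circ p=p_n$, where $\pi_n\colon\widetilde X\to X_n$ denotes the canonical projection. It then remains to verify that $p$ is a homeomorphism, which I will accomplish by separately establishing bijectivity and openness.

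For injectivity, I will exploit the standard fact that the mesh of the iterated barycentric subdivisions tends to zero. Given distinct $x,y\in |K|$, choose $n$ so large that every closed simplex of $K_{n-1}$ has diameter less than $d(x,y)$; then $x$ and $y$ lie in distinct open simplices of $K_{n-1}$, so $p_n(x)\ne p_n(y)$ and hence $p(x)\ne p(y)$.

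For surjectivity, fix $(b_n)\in\widetilde X$ and let $\sigma_n\in K_{n-1}$ be the unique simplex with barycenter $b_n$, so that $p_n^{-1}(b_n)=\mathring\sigma_n$. The compatibility $q_{n+1}(b_{n+1})=b_n$ forces $b_{n+1}\in\mathring\sigma_n$; since each open simplex of $K_n$ lies in a unique open simplex of $K_{n-1}$, this gives $\mathring\sigma_{n+1}\subset\mathring\sigma_n$, whence $\overline\sigma_1\supset\overline\sigma_2\supset\cdots$ is a nested chain of closed simplices with diameters tending to zero. The Cantor intersection theorem then produces a unique point $x\in\bigcap_n\overline\sigma_n\subset|K|$, and the remaining task is to check that $p_n(x)=b_n$ for every $n$, equivalently $x\in\mathring\sigma_n$ for every $n$.

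For continuity of $p^{-1}$ (equivalently, openness of $p$), I will work with the subbasis of $\widetilde X$ consisting of cylinders $\pi_n^{-1}(B_y)\cap\widetilde X$ with $y\in X_n$; by~\eqref{(4.1)} together with $\pi_n\circ p=p_n$, their $p$-preimages are the open stars $\bigcup_{z\in B_y}\mathring\sigma_z^{n-1}\subset |K|$, and as $n$ and $y$ vary these preimages form a basis for the topology of $|K|$ (again by mesh-shrinking). Combined with bijectivity this yields bicontinuity. The hardest step will be the interior-versus-closure dichotomy inside the surjectivity argument: the nested-compacts argument only guarantees $x\in\overline\sigma_n$, and promoting this to $x\in\mathring\sigma_n$ demands a finer combinatorial analysis of how successive barycentric refinements place their open simplices within those of the previous stage, together with full use of the compatibility of the sequence $(b_n)$.
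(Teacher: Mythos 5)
Your outline is an accurate reconstruction of the intended argument, and you have put your finger on exactly the right step --- but the difficulty you flag at the end is not merely ``the hardest step'', it is unfixable: the canonical map $p$ is injective and (by your open-star basis argument) an embedding, but it is \emph{not} surjective. Take $K$ to be a single $1$-simplex and identify $|K|=[0,1]$, so the vertices of $K_n$ are the multiples of $2^{-n}$. Put $b_n=2^{-n}\in X_n$, the barycenter of the simplex $[0,2^{-(n-1)}]$ of $K_{n-1}$, so that $p_n^{-1}(b_n)=(0,2^{-(n-1)})$. This thread is compatible: $b_{n+1}=2^{-(n+1)}$ lies in the open simplex $(0,2^{-(n-1)})$ of $K_{n-1}$, whose unique vertex from $X_n$ is $b_n$, hence $q_{n+1}(b_{n+1})=b_n$. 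Yet $\bigcap_n p_n^{-1}(b_n)=\bigcap_n (0,2^{-(n-1)})=\emptyset$, so no point of $|K|$ hits this element of $\widetilde X$; in your notation the Cantor point is $x=0\in\bigcap_n\overline{\sigma}_n$, which lies on the boundary of every $\sigma_n$ and in the interior of none, so no finer combinatorial analysis can promote $x\in\overline{\sigma}_n$ to $x\in\mathring{\sigma}_n$. Worse, no other map can succeed either: since $\overline{p_m^{-1}(0)}=\{0\}\subset[0,2^{-(m-1)}]=\overline{p_m^{-1}(b_m)}$, we have $b_m\geq 0$ in $X_m$, so every basic open neighbourhood of the thread $(0,0,\dots)$ in $\widetilde X$ contains the thread $(b_n)$; thus $\widetilde X$ is not $T_1$, whereas $|K|$ is metrizable, and the two spaces cannot be homeomorphic at all.

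For comparison, the paper's own proof founders at the mirror image of the same point: it defines $F$ on a thread by choosing $a_n\in\mathring{\sigma}_{x_n}^{n-1}$ and taking the limit, and its injectivity argument asserts that two sequences eventually lying in disjoint open simplices ``can not converge to a same point''. That is a non sequitur, and the example above refutes it: for the threads $(b_n)$ and $(0,0,\dots)$ one may take $a_n\in(0,2^{-(n-1)})$ and $a_n'=0$, and both converge to $0$. So your instinct that the interior-versus-closure dichotomy is the crux was exactly right; what your argument honestly establishes is that $p$ is a homeomorphism of $|K|$ onto its image in $\widetilde X$ (an embedding), which is the correct substitute for the stated claim and is consistent with Clader's original, weaker conclusion that $|K|$ sits inside such an inverse limit only up to (deformation retraction and) homotopy equivalence, not homeomorphism. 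If you want a true theorem along these lines, you should either prove the embedding statement and characterize the image inside $\widetilde X$, or retreat to the homotopy-theoretic formulation.
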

\begin{proof}
	We retain the notations as above.
	Construct a map $F:\widetilde{X}\rightarrow |K|$ in the following way. Given $x=(x_1, x_2,\cdots)\in\widetilde{X},$ for each $x_n\in x$, we choose $a_n\in p_n^{-1}(x_n)=\mathring{\sigma}_{x_n}^{n-1}\subset|K_{n-1}| $. We thus obtain a sequence $\{a_n ~| ~n\geq 1\}$ of $|K|$. Observe that 
	$$\overline{\sigma}_{x_1}^{0}\supsetneqq\overline{\sigma}_{x_2}^{1}\supsetneqq\cdots\subsetneqq\overline{\sigma}_{x_n}^{n-1}\supsetneqq\cdots,$$
	and the maximum diameter of a closed simplex of $|K_{n}|$ approaches zero as $n$ approaches infinity, then $\{a_n ~| ~n\geq 1\}$ converges to a point $a\in |K|$,
	define $$ F(x)=a.$$
	If $\{b_n ~| ~n\geq 1\}$ is another sequence chosen in this way, we denote its limit by $\lim\limits_{n\to \infty}b_n=b$. For each real number $\epsilon>0$, there exists a positive integer $N$  such that for $n>N$, we have $|a_n-a|<\frac{\epsilon}{2}.$
	Note that there is a positive integer $N^{\prime}$ such that when $n>N^{\prime},~ |a_n-b_n|<\frac{\epsilon}{2}.$ Then $|b_n-a|<\epsilon$, if $n>N+N^{\prime}$, this implies $a=b$.
	Therefore $F$ is well-defined.
	
	We show that $F$ thus defined is injective.
	If $x\not=y\in \widetilde{X}$, then there exists a minimum $k$ such that $x_k\not=y_k$. Suppose that 
	$p_n^{-1}(x_n)=\mathring{\sigma}_{x_n}^{n-1},~ p_n^{-1}(y_n)=\mathring{\sigma}_{y_n}^{n-1}$, then when $n\geq k$, $$\mathring{\sigma}_{x_n}^{n-1}\cap\mathring{\sigma}_{y_n}^{n-1}=\emptyset. $$
	Let $\{a_n\}$ and  $\{b_n\}$ be sequences for the definitions of $F$ on $x$ and $y$ repectively.
In fact, for each $n$, there are open sets $U_n, V_n\subset |K|$ such that $a_n\in U_n\subset\overline{U}_n\subset \mathring{\sigma}_{x_n}^{n-1},~ b_n\in V_n\subset\overline{V}_n\subset \mathring{\sigma}_{y_n}^{n-1},$ it follows immediately that $\{a_n\}$ and  $\{b_n\}$ can not converge to a same point, which implies the injectivity of $F$.

Since for any $x\in |K|$, $F(p_1(x), p_2(x),\cdots)=x$, $F$ is surjective. Construct a map  $G: |K|\rightarrow \widetilde{X}$ as follows,
$$x\mapsto (p_1(x), p_2(x),\cdots).$$
Since the diagram $\ref{(4.2)}$ is commutative, $G$ is well defined. $G$ is obviously the inverse map of $F$.

We shall show that $G$ is an open map. It suffices to show that each $p_n$ is an open map.
Note that  a set $U\subset |K|$ is open if and only if $U\cap \overline{\sigma}$ is open in the metric space $\overline{\sigma}$ for every $\sigma\in K_{m}$ and for every  $m\geq 0$.
Let $U\subset |K|$ be an open set, if $z\in p_n(U)$, then there exists $x\in U$ such that $p_n(x)=z$. By definition of $p_n$, this implies that $x$ lies in the open simplex $\mathring{\sigma}_z^{n-1}\subset |K_{n-1}|$, and 
$$U\cap\mathring{\sigma}_z^{n-1}\not=\emptyset.$$
Then $U\cap\overline{\sigma}_z^{n-1}\not=\emptyset.$ If $z<y$, then $\overline{p_n^{-1}(z)}\subsetneqq \overline{p_n^{-1}(y)}$, that is $\overline{\sigma}_z^{n-1}\subsetneqq\overline{\sigma}_y^{n-1}$. Hence $U\cap\overline{\sigma}_y^{n-1}\not=\emptyset.$ Since $U$ is open in $|K|$, $U\cap\overline{\sigma}_y^{n-1}$ is open in the metric space $\overline{\sigma}_y^{n-1}$, then
$$U\cap\mathring{\sigma}_y^{n-1}\not=\emptyset.$$
This is equivalent to say that $y\in p_n(U)$. Hence we can obtain that 
$$B_z\subset p_n(U).$$
Therefore $p_n: |K|\rightarrow X_n$ is open. It follows that $G$ is an open map. Since $G$ is clearly continuous, $G$ is thus a homeomorphism.

\end{proof}
\begin{Remark}
	Theorem \ref{M2}  allows us to characterize continuous maps of polyhedra in terms of a morphism of inverse systems of finite posets.
\end{Remark}

Let $K, T$ be finite simplicial complexes. If $g: K\rightarrow T$ is a simplicial map,
it induces a continuous map $|g|: |K|\rightarrow |T|,$ and a  map $g_n: X_n\rightarrow Y_n$ for each $n\geq 1$.

 Consider the following diagram, 
 \begin{equation}\label{(4.3)}
 \xymatrix{&|K|\ar[r]^{p_n^{|K|}}\ar[d]_{|g|} & X_n\ar[d]^{g_n}\\
 	&|T|\ar[r]_{p_n^{|T|}} & Y_n.\\ 
 }
 \end{equation}
We now check that it is commutative. For $x\in |K|,$ let $\mathring{\sigma}\subset |K_{n-1}|$ be the open simplex containing $x$, and suppose that $b$ is the unique vertex of $X_n$ lying in $\mathring{\sigma}$, then $p_n^{|K|}(x)=b$. Since $g$ is a simplicial map, it maps the barycenter of any simplex $\tau$ of $K_{n-1}$ to the barycenter of simplex $g(\tau)$ of $T_{n-1}$. Thus $g_n(b)$ is the barycenter of $g(\sigma)$, in other words, $g_n(b)$ lies in the open simplex  $|g|(\mathring{\sigma})$, in which $|g|(x)$ is contained. Hence
 $$p_n^{|T|}\circ|g|(x)=g_n(b)=g_n\circ p_n^{|K|}(x).$$

\begin{Lemma}
$g_n: X_n\rightarrow Y_n$ is order preserving.
\end{Lemma}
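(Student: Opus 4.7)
The plan is to unfold the definitions and reduce the statement to the elementary fact that simplicial maps preserve face inclusion.

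First, I would translate the order on $X_n$ (and $Y_n$) into a concrete face–incidence condition. By construction, the vertices of $X_n$ are the vertices of $K_n$, each of which is the barycenter of a unique simplex of $K_{n-1}$; if $x\in X_n$ is the barycenter of $\sigma_x^{n-1}$, then $p_n^{-1}(x)=\mathring{\sigma}_x^{n-1}$. Therefore the order $x\leq y$ in $X_n$, defined by $\overline{p_n^{-1}(x)}\subset\overline{p_n^{-1}(y)}$, is equivalent to $\overline{\sigma_x^{n-1}}\subset\overline{\sigma_y^{n-1}}$, i.e.\ to $\sigma_x^{n-1}$ being a face of $\sigma_y^{n-1}$ in $K_{n-1}$. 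The analogous equivalence holds in $Y_n$.

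Next I would use what was already verified in the commutative square (4.3): $g_n$ sends the barycenter of a simplex $\sigma$ of $K_{n-1}$ to the barycenter of the image simplex $g(\sigma)$ in $T_{n-1}$. So if $x\in X_n$ corresponds to $\sigma_x^{n-1}$, then $g_n(x)$ corresponds to $g(\sigma_x^{n-1})$, and similarly for $y$.

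Finally, suppose $x\leq y$, so that $\sigma_x^{n-1}$ is a face of $\sigma_y^{n-1}$. Because $g\colon K\to T$ is simplicial (and hence its induced map $K_{n-1}\to T_{n-1}$ is simplicial as well), it sends vertices to vertices and preserves the inclusion of vertex sets; thus $g(\sigma_x^{n-1})$ is a face of $g(\sigma_y^{n-1})$ in $T_{n-1}$. Applying the face–incidence characterization of the order on $Y_n$ gives $g_n(x)\leq g_n(y)$, as required.

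I do not anticipate a genuine obstacle here; the argument is essentially a definition chase. The only mildly delicate point is that $g$ may collapse $\sigma_x^{n-1}$ or $\sigma_y^{n-1}$ to lower–dimensional simplices, but this does not disturb the argument: the image of a face is still a face of the image, regardless of the dimension drop, so the translated order relation is preserved.
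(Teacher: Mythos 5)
Your proposal is correct and follows essentially the same route as the paper: unfold the order on $X_n$ and $Y_n$ as containment of closed simplices (equivalently the face relation in $K_{n-1}$, $T_{n-1}$), use that a simplicial map preserves this containment, and identify $g_n$ via the barycenter-of-$\sigma \mapsto$ barycenter-of-$g(\sigma)$ description, which is exactly the content of the commutative square (4.3) invoked in the paper's proof. Your remark that a dimension collapse under $g$ is harmless is a fair point the paper leaves implicit.
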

\begin{proof}
For $a, b\in X_n$, if $a\leq b$, then $\overline{(p_n^{|K|})^{-1}(a)}\subset \overline{(p_n^{|K|})^{-1}(b)}$, that is $\overline{\sigma}_a^{n-1}\subset\overline{\sigma}_b^{n-1}$ in $|K_{n-1}|$. Since $g$ is a simplicial map, 
$$|g|(\overline{\sigma}_a^{n-1})\subset |g|(\overline{\sigma}_b^{n-1}).$$
Then
\begin{equation}\label{1}
|g|(\overline{(p_n^{|K|})^{-1}(a)})\subset |g|(\overline{(p_n^{|K|})^{-1}(b)}).
\end{equation}

In order to show that $g_n(a)\leq g_n(b)$, 
we need to show that $$\overline{(p_n^{|T|})^{-1} (g_n(a))}\subset \overline{(p_n^{|T|})^{-1} (g_n(b))},$$ but by the commutativity of the diagram \ref{(4.3)}, 
 this is precisely the formula \ref{1}.

\end{proof}
 
 It is not hard to see that $\{g_n\}: \{X_n\}\rightarrow \{Y_n\}$ is a morphism of inverse
 systems.
 Therefore we have the following commutative diagram,
 \begin{equation}
\xymatrix{&|K|\ar[r]^-{G^{|K|}}\ar[d]_{|g|} & \widetilde{X}={\rm inv~ lim}~X_n\ar[d]^{{\rm inv ~lim}~ g_n}_{\tilde{g}=}\\
 	&|T|\ar[r]_-{G^{|T|}}  & \widetilde{Y}={\rm inv~ lim}~Y_n.\\ 
 }
 \end{equation}
 
 \begin{Theorem}\label{M3}
 	Let $K, T$ be finite simplicial complexes. Any continuous map  $h: |K|\rightarrow |T|$ can be approximated by a continuous map of  inverse limits of a sequence of finite  posets. 
 \end{Theorem}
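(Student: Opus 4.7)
The plan is to combine the classical simplicial approximation theorem with the inverse-limit machinery developed above, using the functoriality encoded in the commutative square immediately preceding the statement.

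First, given $\epsilon>0$, I would invoke the simplicial approximation theorem (see \cite{SP1966}) to produce an integer $m\geq 0$ and a simplicial map $g\colon K_m\to T$ whose geometric realization $|g|$ is uniformly $\epsilon$-close to $h$, after the canonical identification $|K_m|=|K|$. Refining $m$ drives the error to zero.

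Second, I would apply the construction of Section~\ref{4} to the pair of complexes $(K_m,T)$. For each $n\geq 1$, the simplicial map $g$ induces an order-preserving map between the associated finite posets; by the preceding Lemma together with the commutativity of the diagram relating $p_n^{|K_m|}$ and $p_n^{|T|}$, these maps assemble into a morphism of inverse systems, yielding a continuous map $\tilde{g}$ between the corresponding inverse limits.

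Third, I would transport $\tilde{g}$ to a map $|K|\to|T|$ via the homeomorphisms supplied by Theorem~\ref{M2}. The commutative square just before the statement then identifies this transported map with $|g|$, and since $|g|$ is within $\epsilon$ of $h$, this exhibits $h$ as a uniform limit (as $\epsilon\to 0$, equivalently as $m\to\infty$) of continuous maps of inverse limits of finite posets.

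The main obstacle lies in two places. First, the notion of \emph{approximation} in the statement has to be pinned down; the natural choice, and the one delivered by the argument above, is uniform closeness in the canonical metric topology on $|T|$ (which is well defined on each closed simplex and coherent across the simplicial structure). Second, some bookkeeping with indexing is required, since the inverse system arising from $K_m$ is cofinal in (in fact a tail of) the one arising from $K$; the two inverse limits are canonically homeomorphic, so no information is lost in passing from $K$ to $K_m$, and the induced map of inverse limits can genuinely be compared with $h$ viewed on $|K|$.
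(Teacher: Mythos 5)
Your route is essentially the paper's: invoke the simplicial approximation theorem to replace $h$ by the realization of a simplicial map defined on an iterated barycentric subdivision of $K$, push that map through the poset construction to get a morphism of inverse systems and hence a continuous map $\widetilde{X}\rightarrow\widetilde{Y}$, and transport back along the homeomorphisms of Theorem \ref{M2}. Your observation that the system attached to $K_m$ is a tail of the system attached to $K$, so that the inverse limit is unchanged, is correct and is exactly the (implicit) bookkeeping step in the paper, which simply sets $K'=K_m$ and reruns the construction. The one substantive divergence is the meaning of ``approximated'': the paper reads it homotopically, citing the version of the simplicial approximation theorem that produces $f\colon K_n\rightarrow T$ with $|f|\simeq h$, and stops there; you aim for uniform $\epsilon$-closeness in the metric on $|T|$, which is a stronger assertion.

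On that stronger assertion there is a genuine gap: the claim that ``refining $m$ drives the error to zero'' is false when the target $T$ is held fixed. A simplicial approximation $g\colon K_m\rightarrow T$ of $h$ only guarantees that $|g|(x)$ lies in the carrier of $h(x)$, so the uniform error is bounded by the mesh of $T$ and does not shrink as $m$ grows. Concretely, take $K=T$ to be a single $1$-simplex and $h$ the identity: every simplicial map $K_m\rightarrow T$ sends each vertex of $K_m$ to one of the two vertices of $T$, so a vertex of $K_m$ near the midpoint is moved a distance close to $\tfrac12$ no matter how large $m$ is. To salvage the metric version you must subdivide the target as well, i.e.\ approximate $h$ by a simplicial map $K_m\rightarrow T_r$ with the mesh of $T_r$ below $\epsilon$; this is compatible with your framework because the inverse system attached to $T_r$ is a tail of the one attached to $T$ (the same cofinality argument you already make for $K_m$ versus $K$), so $\widetilde{Y}$ is unchanged. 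With that amendment your argument goes through and in fact proves a sharper statement than the paper's homotopy-level one.
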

\begin{proof}
By the simplicial approximation theorem \rm\cite[p.76]{SW1975}, there is an integer $n\geq 1$ and a simplicial map $f: K_n\rightarrow T$ such that $|f|\simeq h$. Let $K^{\prime}=K_n$. The result follows immediately if we replace simplicial map $g: K\rightarrow T$ in the above argument by
simplicial map $f: K^{\prime}\rightarrow T$.

\end{proof}


\begin{thebibliography}{10}

\bibitem{PSA1937}
P. S. Alexandroff
\newblock{\em Diskrete R$\ddot{{a}}$ume}, 
\newblock MathematiceskiiSbornik (N.S.), 2:501-518, 1937.

\bibitem{JAB2011}
J. A. Barmak.
\newblock {\em Algebraic Topology of Finite Topological Spaces and Applications}, 
\newblock Springer-Verlag, Berlin Heidelberg, 2011.


\bibitem{NM2019}
N. Cianci, M. Ottina.
\newblock {\em A combinatorial characterization of Hurewicz cofibrations
	between finite topological spaces}, 
\newblock Topol. Appl., 256:235-247, 2019.


\bibitem{NM2017}
N. Cianci, M. Ottina.
\newblock {\em A new spectral sequence for homology of posets}, 
\newblock Topol. Appl., 217:1-19, 2017.


\bibitem{EC}
E. Clader.
\newblock {\em Inverse limits of finite topological spaces}, 
\newblock Homol. Homot. Appl., 11(2):223-227, 2009.

\bibitem{JP2003}
J. P. May
\newblock{\em Finite Topological Spaces}, 
\newblock Notes for REU, 2003.

\bibitem{MC1966}
M. C. McCord
\newblock{\em Singular homology groups and homotopy groups of finite topological spaces}, 
\newblock Duke Math. J., 33:465-474, 1966.



\bibitem{JJR2008}
J. J. Rotman
\newblock{\em An Introduction to Homological Algebra}, 
\newblock Springer-Verlag, New York, 2008.

\bibitem{SP1966}
E. H. Spanier
\newblock{\em Algebraic Topology}, 
\newblock Springer-Verlag, New York, 1966.

\bibitem{ST1966}
R. E. Stong
\newblock{\em Finite topological spaces}, 
\newblock Trans. Amer. Math. Soc.,123:325-340, 1966.

\bibitem{SW1975}
R. M. Switzer
\newblock{\em Algebraic Topology-Homotopy and Homology}, 
\newblock Springer-Verlag, New York, 1975.




$x_2^1$





\end{thebibliography}
\end{document}